\newcommand\Z{\mathbb{Z}}
\newcommand\Q{\mathbb{Q}}
\newcommand\N{\mathbb{N}}
\newcommand{\PP}{\mathbb{P}}      
\newcommand{\CC}{\mathbb{C}}      
\newcommand{\msS}{\mathscr{S}}
\newcommand{\scW}{\mathscr{W}}
\newcommand{\scU}{\mathscr{U}}
\newcommand{\scX}{\mathscr{X}}
\newcommand{\mfp}{\mathfrak{p}}
\def\p{{\mathfrak{p}}}
\newtheorem{theorem}{Theorem}
\newtheorem{definition}[theorem]{Definition}
\newtheorem{lemma}[theorem]{Lemma}
\newtheorem{proposition-definition}[theorem]{Proposition-Definition}
\newtheorem{conjecture}[theorem]{Conjecture}
\theoremstyle{definition}
\newtheorem{example}[theorem]{Example}
\theoremstyle{remark}
\newtheorem*{remark}{Remark}
\title[Ramification and PCF morphisms]{Finite ramification for preimage fields of postcritically finite morphisms}
\begin{document}

\author[Bridy et al.]{Andrew Bridy, Patrick Ingram, Rafe Jones, Jamie Juul, Alon Levy, Michelle Manes, Simon Rubinstein-Salzedo, and Joseph~H.~Silverman}

\address{University of Rochester,  Rochester, New York}
\email{abridy@ur.rochester.edu}
\address{Colorado State University, Fort Collins, Colorado}
\email{pingram@rams.colostate.edu\footnote{Corresponding author.}}
\address{Carleton College, Northfield, Minnesota}
\email{rfjones@carleton.edu}
\address{Amherst College, Amherst, Massachusetts}
\email{jjuul@amherst.edu}
\address{KTH Royal Institute of Technology,  Stockholm, Sweden}
\email{alonlevy@kth.se}
\address{University of Hawaii, Honolulu, Hawaii}
\email{mmanes@math.hawaii.edu}
\address{Stanford University, Stanford, CA}
\email{simonr@stanford.edu}
\address{Brown University, Providence, Rhode Island}
\email{jhs@math.brown.edu}

\thanks{%
The authors would like to thank AIM and the organizers of the March
2014 AIM workshop on Postcritically finite maps in complex and
arithmetic dynamics at which this research was started.  
Bridy's research partially supported by NSF Grant \#EMSW21-RTG.  
Ingram's research partially supported by Simons Collaboration Grant \#283120.
Juul's research partially supported by DMS-1200749.  
Levy's research partially supported by the G\"oran Gustafsson Foundation.
Manes's research partially supported by NSF DMS \#1102858.
Silverman's research partially supported by Simons Collaboration Grant \#241309.
}

\begin{abstract} Given a finite endomorphism $\varphi$ of a variety $X$ defined over the field of fractions $K$ of a Dedekind domain, we study the extension $K(\varphi^{-\infty}(\alpha)) : = \bigcup_{n \geq 1} K(\varphi^{-n}(\alpha))$ generated by the preimages of $\alpha$ under all iterates of $\varphi$. In particular when $\varphi$ is post-critically finite, i.e. there exists a non-empty, Zariski-open $W \subseteq X$ such that $\varphi^{-1}(W) \subseteq W$ and $\varphi : W \to X$ is \'etale, we prove that $K(\varphi^{-\infty}(\alpha))$ is ramified over only finitely many primes of $K$. This provides a large supply of infinite extensions with restricted ramification, and generalizes results of Aitken-Hajir-Maire \cite{aitken} in the case $X = \mathbb{A}^1$ and Cullinan-Hajir, Jones-Manes \cite{cullinan, jones-manes} in the case $X = \mathbb{P}^1$. Moreover, we conjecture that this finite ramification condition characterizes post-critically finite morphisms, and we give an entirely new result showing this for $X = \mathbb{P}^1$. The proof relies on Faltings' theorem and a local argument. 
\end{abstract}

\maketitle

\section{Introduction}

For a number field $K$ and a finite set $S$ of places of $K$, the arithmetic fundamental group $\Gal(K_S/K)$ is the Galois group of the maximal algebraic extension of $K$ unramified outside of $S$. These groups are objects of great number-theoretic interest, and well-known quotients of them arise through algebraic geometry, for example in the form of linear representations attached to abelian varieties. Recently another approach to constructing such quotients has arisen, one that relies on arithmetic dynamical systems. It was first observed in \cite{aitken} that adjoining iterated pre-images of $\alpha \in K$ under a polynomial $f \in K[x]$ gave rise to infinite extensions unramified outside a finite set of primes, \textit{provided} that $f$ is post-critically finite, i.e., the forward orbit of each critical point of $f$ is finite. In \cite{cullinan} and \cite[Theorem 3.2]{jones-manes} this approach was extended to the case where $f$ is a rational function. In this paper, we extend this result even further, to the case where $f$ is allowed to be any finite endomorphism of a smooth, irreducible variety, and $K$ is allowed to be the fraction field of any Dedekind domain. We conjecture, however, that this finite ramification phenomenon for arithmetic dynamical systems is limited to those that are post-critically finite, and we prove this in the case where $f$ is a rational function.

We begin by extending the 
definition of post-critical finiteness of a rational function to the case of endomorphisms of more general varieties. 
\begin{definition} \label{pcfdef}
Let $X$ be an irreducible, smooth variety, and let $\varphi:X\to X$ be a finite morphism. We say that $\varphi$ is \textbf{post-critically finite (PCF)} if there is a non-empty Zariski-open $W\subseteq X$ such that $\varphi^{-1}(W)\subseteq W$, and such that $\varphi:W\to X$ is \'{e}tale.
\end{definition}

Note that a PCF map must be separable, since an inseparable morphism is nowhere \'{e}tale. In the case $X=\PP^N$, over the complex numbers, it is easy to see that Definition~\ref{pcfdef} is equivalent to the usual one, namely that if $C_\varphi\subseteq \PP^N_\CC$ is the critical locus of $\varphi$, then the sequence $\varphi^n(C_\varphi)$ is supported on a finite collection of irreducible subvarieties of $\PP^N_\CC$ as $n\to\infty$. Indeed, this latter condition holds if and only if  the union
\begin{equation} \label{union}
\bigcup_{n > 0} \varphi^n(C_\varphi)\subseteq\PP^N_\CC
\end{equation}
 is an algebraic subvariety of $\PP^N_\CC$, and thus the condition in Definition~\ref{pcfdef} is satisfied, where we take $W$ to be the complement of the union in \eqref{union}. 

Although the definition of PCF maps is purely geometric, PCF maps have special arithmetic properties besides the finite ramification of preimage fields explored in this article. See~\cite{favre-gauthier, pink} for some recent examples.

In this paper, we consider the primes that ramify in extensions obtained by taking backward images of $K$-points of $X$, where we typically take $K$ to be a global field.

\begin{definition}\label{k-infty}Let $K$ be any field, and let $\varphi: X \to X$ be defined over $K$. If $\alpha \in X(K)$ and $n$ is a positive integer, we set $K_{n, \varphi}(\alpha) = K(\varphi^{-n}(\alpha))$, and $K_{\infty, \varphi}(\alpha) = \bigcup_{n > 0}K(\varphi^{-n}(\alpha))$. If $K$ is a global field, we set $S_{n, \varphi}(\alpha)$ to be the set of primes of $K$ that ramify in $K_{n, \varphi}(\alpha)$ for $n \in \N \cup \{\infty\}$. When $\varphi$ is clear from context, we suppress it and write $K_n(\alpha)$ and $S_n(\alpha)$.\end{definition} 

Our first theorem says that if a map is PCF, then the field generated by the full iterated backward orbit of an algebraic point is ramified at only finitely many primes.

\begin{theorem}\label{th:finitely ramified}
Let $X$ be a smooth, irreducible, projective variety defined over the field of fractions $K$ of a Dedekind domain $R$, and suppose that $\varphi: X \to X$ is a PCF morphism defined over $K$. Then there exists a non-empty Zariski-open $W\subseteq X$ such that the following holds: for any $\alpha \in W(K)$, $S_\infty(\alpha)$ is a finite set.
\end{theorem}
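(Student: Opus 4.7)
The plan is to spread out the PCF data to an integral model of $X$ over $\operatorname{Spec} R$, losing only finitely many primes, and to show that the preimage fields $K_n(\alpha)$ are then cut out by a tower of finite étale covers of that model. Fix a flat projective model $\pi: \mathcal{X} \to \operatorname{Spec} R$ of $X$—for instance, by taking the scheme-theoretic closure of $X$ in a projective embedding—and let $\mathcal{Z} \subseteq \mathcal{X}$ denote the scheme-theoretic closure of the reduced closed subvariety $Z := X \setminus W$, with $\mathcal{W} := \mathcal{X} \setminus \mathcal{Z}$. After inverting finitely many primes of $R$, the finite morphism $\varphi: X \to X$ extends to a finite morphism $\varphi: \mathcal{X} \to \mathcal{X}$ over an open subscheme $U_0 \subseteq \operatorname{Spec} R$.

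The invariance $\varphi^{-1}(W) \subseteq W$ from Definition~\ref{pcfdef} then transfers to the integral model for free. Rewriting the condition as $Z \subseteq \varphi^{-1}(Z)$ on the generic fiber, and observing that $\varphi^{-1}(\mathcal{Z})$ is a closed subscheme of $\mathcal{X}|_{U_0}$ containing $\varphi^{-1}(Z)$ and hence $\overline{\varphi^{-1}(Z)} \supseteq \overline{Z} = \mathcal{Z}$, we obtain $\varphi^{-1}(\mathcal{W}) \subseteq \mathcal{W}$ in $\mathcal{X}|_{U_0}$. Next, the non-étale locus of $\varphi: \mathcal{W} \to \mathcal{X}$ is a closed subscheme of $\mathcal{W}$ that, by étaleness on the generic fiber, is supported on finitely many special fibers; inverting those primes yields an open $U_1 \subseteq U_0$ over which $\varphi: \mathcal{W} \to \mathcal{X}$ is finite étale. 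Iterating along the inclusion $\varphi^{-1}(\mathcal{W}) \subseteq \mathcal{W}$, the composite $\varphi^n: \varphi^{-n}(\mathcal{W}) \to \mathcal{W}$ is finite étale over $U_1$ for every $n \ge 1$.

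Given $\alpha \in W(K)$, the valuative criterion of properness extends $\alpha$ to a section $\bar\alpha: \operatorname{Spec} R \to \mathcal{X}$. Since $\alpha \in W$, the preimage $\bar\alpha^{-1}(\mathcal{Z})$ is a proper closed subset of $\operatorname{Spec} R$, hence consists of finitely many closed points; let $U_\alpha \subseteq U_1$ be the open subscheme obtained by removing these bad points from $U_1$. Then $\bar\alpha(U_\alpha) \subseteq \mathcal{W}$, and pulling $\varphi^n: \varphi^{-n}(\mathcal{W}) \to \mathcal{W}$ back along $\bar\alpha$ produces a finite étale cover $T_n := U_\alpha \times_{\mathcal{W}, \bar\alpha} \varphi^{-n}(\mathcal{W}) \to U_\alpha$ whose generic fiber is $\operatorname{Spec}$ of an étale $K$-algebra whose residue fields have compositum $K_n(\alpha)$. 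Finite étale covers of $U_\alpha$ yield unramified extensions at every prime of $U_\alpha$, so $S_n(\alpha) \subseteq \operatorname{Spec} R \setminus U_\alpha$ for all $n$, and therefore $S_\infty(\alpha) = \bigcup_n S_n(\alpha)$ is finite.

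I expect the main obstacle to be the spreading-out bookkeeping: verifying that the key invariance $\varphi^{-1}(\mathcal{W}) \subseteq \mathcal{W}$ and étaleness both propagate to the model at the cost of only finitely many primes, and that the section $\bar\alpha$ can be arranged to miss $\mathcal{Z}$ at all but finitely many primes. The crucial simplification is that $\varphi^{-1}(\mathcal{W}) \subseteq \mathcal{W}$ holds automatically in the model once we take scheme-theoretic closures, so that a single étale spreading-out produces a finite étale \emph{tower} on which every iterate is simultaneously controlled.
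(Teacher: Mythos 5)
Your proposal is correct and takes essentially the same approach as the paper: spread $X$, $W$, $\varphi$, and $\alpha$ out over a localization of $R$, observe that $\varphi^{-1}(\mathcal{W}) \subseteq \mathcal{W}$ together with étaleness of $\varphi: \mathcal{W}\to\mathcal{X}$ forces $\varphi^n : \varphi^{-n}(\mathcal{W}) \to \mathcal{W}$ to be finite étale for every $n$, and conclude by pulling back along the integral section $\bar\alpha$. The only cosmetic difference is that the paper isolates the last step as a standalone Chevalley--Weil lemma, whereas you inline the observation that pulling back a finite étale cover along a section of $U_\alpha$ produces extensions unramified over $U_\alpha$.
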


\begin{remark}An equivalent way of stating Theorem~\ref{th:finitely ramified} is that the ramification of the associated so-called arboreal representation (see \cite{boston-jones1, boston-jones2, jones-manes}) has finite support.\end{remark}

Theorem~\ref{th:finitely ramified} is in the same spirit as Beckmann's theorem \cite[Theorem 1.2]{beckmann1} on ramified primes in specializations of branched covers of $\PP^1$. It is also interesting to consider the dependence of $S_\infty(\alpha)$ on the point $\alpha$. The existence of primes that ramify independently of the choice of $\alpha$ is related to ramification in the field of moduli of $\phi$. See~\cite{beckmann2},~\cite{fulton}, and~\cite{zannier1} for further discussions. For certain maps it is possible to show that there are primes (for example primes of bad reduction) that lie in $\bigcap_{\alpha\in W(K)}S_\infty(\alpha)$. It is not true in general that $\bigcup_{\alpha\in W(K)}S_\infty(\alpha)$ is a finite set (see Examples~\ref{P2ex} and~\ref{Tch-ex}). 

The subvariety $W\subseteq X$ in Theorem~\ref{th:finitely ramified} can, at worst, be taken to be the $W$ from Definition~\ref{pcfdef} (i.e., the complement of the postcritical set), as shown in the proof of Theorem~\ref{th:finitely ramified}. In the case $X=\PP^1$, we show in Theorem \ref{P1} that we may in fact take $W = X$, thus reproducing in this case the existing results in the literature (\cite{aitken, cullinan, jones-manes}). Levy and Tucker~\cite{levy-tucker} have announced a proof of a related conjecture about the behavior of PCF morphisms under restriction that would imply that Theorem~\ref{th:finitely ramified} is true with $W = X$.

We conjecture that the finite ramification of preimage fields given by Theorem~\ref{th:finitely ramified} in fact characterizes PCF morphisms:

\begin{conjecture}\label{converseconj}
Let $X$ be a smooth, irreducible, projective variety defined over the field of fractions $K$ of a Dedekind domain $R$. Suppose that $\varphi: X \to X$ is a non-PCF morphism defined over $K$, $\alpha \in X(K)$, and $\bigcup_{n > 0}\varphi^{-n}(\alpha)$ is an infinite set. Then $S_\infty(\alpha)$ is an infinite set.
\end{conjecture}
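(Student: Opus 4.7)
The plan is to argue by contradiction: assume $\varphi:\PP^1\to\PP^1$ is not PCF and yet $S := S_\infty(\alpha)$ is finite. The strategy combines a local ramification analysis (which, using finiteness of $S$, forces the forward orbit of some critical point of $\varphi$ to avoid $\alpha$ modulo every prime outside a finite enlarged set) with a dynamical Siegel-type theorem on integral points in orbits (which rules out this behavior whenever the critical orbit is infinite). Faltings' theorem enters through the dynamical Siegel statement.

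Since $\varphi$ is non-PCF, I would pick a critical point $c \in \PP^1(\bar K)$ with infinite forward $\varphi$-orbit and work over the finite extension $L = K(c)$. Enlarge $S$ to a finite set $S_L$ of primes of $L$ containing the primes of $L$ above $S$, the archimedean places, the primes of bad reduction of $\varphi$, the primes where $c$ or $\alpha$ fail to be integral, and the primes whose residue characteristic divides $\deg\varphi$. The local argument is: for any $\mathfrak{P} \notin S_L$, good reduction of $\varphi$ at $\mathfrak{P}$ combined with unramifiedness of $K_n(\alpha)$ at all primes above $\mathfrak{P}$ (for every $n$) forces $\varphi^n(x) - \alpha$ to have no multiple roots modulo $\mathfrak{P}$. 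Equivalently, $\alpha$ is not a critical value of $\bar\varphi^n$ modulo $\mathfrak{P}$. Since the critical values of $\varphi^n$ are precisely the post-critical points $\{\varphi^m(c') : c' \in C_\varphi,\ 1 \leq m \leq n\}$, this forces $\varphi^m(c) \not\equiv \alpha \pmod{\mathfrak{P}}$ for every $m \geq 1$ and every $\mathfrak{P} \notin S_L$. Aggregating over $m$ and over $\mathfrak{P}$ gives that $\varphi^m(c) - \alpha$ is an $S_L$-unit in $L$ for every $m \geq 1$ with $\varphi^m(c) \neq \alpha$.

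Next I would invoke the dynamical Siegel-type theorem for rational maps on $\PP^1$ (Silverman, ultimately resting on Faltings' theorem for integral points): for $\varphi$ of degree $\geq 2$, a wandering point $c$, and $\alpha \in \PP^1(L)$, the orbit $\{\varphi^m(c)\}$ contains only finitely many $(S_L,\alpha)$-integral elements, except when $\alpha$ is a totally ramified periodic point of $\varphi^2$. That exception is ruled out by the hypothesis that $\bigcup_n \varphi^{-n}(\alpha)$ is infinite, since totally ramified periodic points $\alpha$ satisfy $\bigcup_n \varphi^{-n}(\alpha) = \{\alpha, \varphi(\alpha)\}$, a finite set. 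Thus only finitely many $\varphi^m(c)$ can yield $S_L$-units $\varphi^m(c) - \alpha$, contradicting the local conclusion.

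The main obstacle I foresee is the local argument: one needs a clean equivalence between unramifiedness of $K_n(\alpha)$ at $\mathfrak{P}$ and the absence of multiple roots of $\varphi^n - \alpha$ modulo $\mathfrak{P}$. The tame case follows from the standard Dedekind discriminant criterion, but wild primes must be absorbed into $S$, which is why I included primes of residue characteristic dividing $\deg \varphi$ in $S_L$; one then has to verify that only finitely many such primes are needed. A secondary technical point is the base change from $K$ to $L = K(c)$: one must verify that $S_L$ remains finite and that ramification in $K_\infty(\alpha)/K$ controls ramification in $L_\infty(\alpha)/L$, which is a routine base-change argument.
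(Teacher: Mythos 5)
Your top-level strategy — reduce to a collision statement about the forward orbit of a critical point, then invoke a Diophantine finiteness theorem — is the same shape as the paper's argument. But the key local step is incorrect as stated, and the gap is precisely what the paper's own Example (the one with $f(z)=z(z-p)$, $\alpha=0$) is warning against.

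You claim that for $\mathfrak{P}\not\in S_L$, unramifiedness of all the $K_n(\alpha)$ at primes above $\mathfrak{P}$ forces $\varphi^n(x)-\alpha$ to have no multiple roots modulo $\mathfrak{P}$, i.e.\ that a collision of the critical orbit with $\alpha$ modulo $\mathfrak{P}$ must produce ramification somewhere in the preimage tower. This is false. The Dedekind discriminant criterion gives the \emph{other} direction (ramification forces the discriminant to vanish mod $\mathfrak{P}$), and in general a collision modulo $\mathfrak{P}$ is not sufficient to produce ramification. The correct local statement is only that if $K_\infty(\alpha)$ is unramified at $\mathfrak{P}$, then every positive valuation $v_\mathfrak{P}(\varphi^m(\zeta)-\alpha)$ must be \emph{divisible by the local degree $e$ of $\varphi$ at the critical point $\zeta$}; valuations that are positive but $\equiv 0 \pmod e$ produce unit-valuation, hence unramified, roots, exactly as in the Newton-polygon picture. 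The example $f(z)=z(z-p)$, $\alpha=0$, $\zeta=p/2$, $e=2$, $v_p(f(\zeta)-\alpha)=v_p(-p^2/4)=2$ illustrates this: there is a collision mod $p$ at level $1$ with no ramification in $K_1(0)=\Q$.

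Because of this, the dynamical Siegel theorem (which controls only the set of $m$ with $\varphi^m(\zeta)-\alpha$ an $S$-unit) is not strong enough. You would instead need to produce infinitely many pairs $(\mathfrak{p},m)$ with $v_\mathfrak{p}(\varphi^m(\zeta)-\alpha)$ positive \emph{and not divisible by} $e$. This is exactly the content of the paper's Lemma~\ref{divisors}, and its proof requires the Darmon--Granville theorem on superelliptic equations (which rests on Faltings, hence the reference in the abstract) rather than Silverman's Siegel-type theorem on integral orbit points (which rests on Roth/Siegel). Once Lemma~\ref{divisors} is in hand, the paper then completes the argument with a Newton-polygon computation — essentially the refined local lemma you would need — showing that a collision of non-$e$-divisible valuation forces a root of non-integral $\mathfrak{P}$-valuation, contradicting unramifiedness. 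If you patch your local step to the correct ``divisible by $e$'' form and replace dynamical Siegel by the Darmon--Granville-based statement, you arrive essentially at the paper's proof.
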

If $\bigcup\varphi^{-n}(\alpha)$ is not Zariski-dense then we say that $\alpha$ is an \emph{exceptional point} for $\varphi$. For $X = \PP^1$ this means that $\bigcup\varphi^{-n}(\alpha)$ is a finite set, which occurs only when, after coordinate change, $\alpha = \infty$ and $\varphi$ is a polynomial, or when $\alpha \in \{0, \infty\}$ and $\varphi(z) = z^d$. If $\dim X > 1$ and $\alpha$ is exceptional for $\varphi$ but $\bigcup_{n > 0}\varphi^{-n}(\alpha)$ is infinite, then the Zariski-closure of $\bigcup_{n > 0}\varphi^{-n}(\alpha)$ is a positive-dimension subvariety $Y$ that is fixed under $\varphi$, and $\alpha$ is not exceptional under $\varphi|_Y$.

Conjecture~\ref{converseconj} is known only in the case where $K$ is a function field over a number field $k$, $\varphi$ is a univariate polynomial defined over $k$, and $\alpha$ is transcendental over $k$ \cite[Theorem 1.1]{aitken}. We prove the following:

\begin{theorem} \label{P1converse}
Conjecture~\ref{converseconj} holds when $K$ is a number field and $X = \PP^1$. 
\end{theorem}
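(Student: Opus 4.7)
\textbf{Proof plan for Theorem~\ref{P1converse}.}

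The plan is to combine a local discriminant computation with a Diophantine finiteness result for orbits — concretely, Silverman's theorem on $(S,\alpha)$-integral points in orbits, which is a consequence of Roth-style/Faltings-style results on integral points.

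\emph{Step 1: local argument.} A chain-rule calculation shows that the critical values of $\varphi^n$ are exactly the truncated postcritical set $V_n := \bigcup_{j=1}^n \varphi^j(C_\varphi)$, where $C_\varphi$ is the critical locus of $\varphi$ on $\PP^1_{\overline{K}}$. Consequently the discriminant of $\varphi^n(z) - \alpha$ factors, up to $S_0$-units for a fixed finite set $S_0$ of ``bad'' primes depending only on $\varphi$ and the chosen model, as $\prod_{v \in V_n}(\alpha - v)^{m_v}$. Therefore for any prime $\mfp \notin S_0$, $\mfp$ ramifies in $K_n(\alpha)/K$ if and only if $\alpha \equiv v \pmod{\mfp}$ for some $v \in V_n$. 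Setting $P(\varphi) := \bigcup_{n \ge 1}\varphi^n(C_\varphi)$ and taking $n \to \infty$ yields, up to the finite set $S_0$,
\[
S_\infty(\alpha) = S_0 \cup \{\mfp : \alpha \equiv v \pmod{\mfp}\text{ for some }v \in P(\varphi)\setminus\{\alpha\}\}.
\]

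\emph{Step 2: reduction to $S$-units and application of non-PCF.} Suppose for contradiction that $S_\infty(\alpha)$ is finite. Let $L \supseteq K$ be a number field containing every critical point of $\varphi$, and let $S$ be a finite set of places of $L$ containing all places above $S_0 \cup S_\infty(\alpha)$, all archimedean places, and any extra primes needed to put $\alpha$ and $\infty$ into standard integral position. Step 1 then forces $v - \alpha \in \mathcal{O}_{L,S}^{\times}$ for every $v \in P(\varphi)$ with $v \ne \alpha$. Because $\varphi$ is non-PCF, some critical point $c \in C_\varphi$ has an infinite forward orbit under $\varphi$, so infinitely many pairwise distinct iterates $\varphi^n(c)$ must satisfy $\varphi^n(c) - \alpha \in \mathcal{O}_{L,S}^{\times}$.

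\emph{Step 3: Diophantine contradiction.} Silverman's theorem on $(S,\alpha)$-integral points in orbits asserts that for any wandering point $c \in \PP^1(L)$ and any non-exceptional target $\alpha \in \PP^1(L)$ for $\varphi$, only finitely many $n$ make $\varphi^n(c)$ be $(S,\alpha)$-integral. In our situation, non-PCF supplies a wandering critical point $c$, and the hypothesis that $\bigcup_n \varphi^{-n}(\alpha)$ is infinite is equivalent, on $\PP^1$, to non-exceptionality of $\alpha$. Hence Step 2 contradicts Silverman's theorem, and so $S_\infty(\alpha)$ cannot have been finite.

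\emph{Main obstacle.} The delicate part is Step 1. The identification of critical values of $\varphi^n$ with $V_n$ is immediate over $\overline{K}$, but converting this into a clean statement about ramification of $K_n(\alpha)/K$ requires careful handling of the point at infinity, the primes of bad reduction for $\varphi$, and the primes at which iterates of $\varphi$ develop poles meeting $\alpha$; all of these must be absorbed into the finite set $S_0$. Once Step 1 is in place, the remainder is essentially a formal reduction to a known finiteness theorem for orbit integrality.
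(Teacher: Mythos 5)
Your Step 1 contains the crucial gap, and the paper itself supplies a counterexample to it.  You assert that (up to a fixed finite bad set $S_0$) a prime $\mfp$ ramifies in $K_n(\alpha)/K$ \emph{if and only if} $\alpha \equiv v \pmod{\mfp}$ for some truncated postcritical value $v$.  The ``only if'' direction is fine, but the ``if'' direction is false.  The paper's own example takes $f(z) = z(z-p)$ and $\alpha = 0$: the unique finite critical point is $p/2$, its critical value is $-p^2/4 \equiv 0 \pmod p$, so $\alpha$ collides with the postcritical set modulo $p$, yet $f^{-1}(0) = \{0,p\}$ is rational and there is no ramification whatsoever.  The discriminant of $\varphi^n(z)-\alpha$ being divisible by $\mfp$ only shows the polynomial has a repeated root modulo $\mfp$; it does not force the field discriminant of the splitting field to be divisible by $\mfp$.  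In the example, the factor $\alpha - v$ is divisible by $p^2$ and the relevant ramification index is $e = 2$, and this divisibility of the valuation by $e$ is exactly the mechanism by which ramification is avoided.

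Because of this, your Step 3 also does not deliver what you need.  Silverman's orbit-integrality theorem produces infinitely many $n$ and primes $\mfp \notin S$ with $v_\mfp(\varphi^n(c) - \alpha) > 0$, but gives no control on that valuation beyond positivity.  To actually force ramification one must find a prime $\mfp$ and an $n$ with $v_\mfp(\varphi^n(c) - \alpha)$ positive \emph{and not divisible by} the local degree $e$ of $\varphi$ at the critical point $c$; only then does a Newton-polygon argument on $\psi(z) = \varphi(z+c) - \delta$ produce a root of non-integral $\mfP$-valuation, contradicting the assumption that $\mfp$ is unramified in $K_n(\alpha)$.  This strengthened Diophantine statement is the content of the paper's Lemma~\ref{divisors}, and it does not follow from Silverman's theorem; the paper proves it via the Darmon--Granville theorem on generalized Fermat equations (i.e., ultimately via Faltings), applied to the superelliptic equation $sy^e = F(x,1)$ that arises from the hypothesis that every orbit valuation is a multiple of $e$.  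So your overall architecture (local Newton-polygon obstruction plus an orbit-finiteness theorem) is close in spirit to the paper's, but the specific local criterion you propose is too coarse, and the finiteness result you invoke is correspondingly too weak.
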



We also raise a second conjecture here: in Theorem~\ref{th:finitely ramified}, we get an infinite field extension of $K$ that is ramified over finitely many primes of $K$. It is a longstanding problem to study extensions of number fields with restricted \textit{tame} ramification, and it would be interesting to have a dynamical source of such extensions. We conjecture, however, that this is not possible: 

\begin{conjecture}\label{wild}
In the situation of Theorem~\ref{th:finitely ramified}, there always exists at least one prime of $K$ in $S_\infty(\alpha)$ where the ramification is wild.
\end{conjecture}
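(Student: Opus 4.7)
The approach is to identify a prime $\mathfrak{p}$ of $K$ whose residue characteristic divides a critical ramification index of $\varphi$, and to show that the local preimage tower at $\mathfrak{p}$ must be wildly ramified. Since the statement is posed as an open problem, the sketch below is speculative and flags the main difficulties.

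First I would set $d := \deg \varphi \geq 2$. If $\varphi$ happens to be \'etale everywhere on $X$, then $X$ is highly constrained---roughly, (a quotient of) an abelian variety with $\varphi$ an isogeny-type map---and the division tower sitting inside $K_{\infty,\varphi}(\alpha)$ has $p$-adic Lie image on $T_p X$ for each $p \mid d$, directly supplying wild ramification at primes above $p$. Otherwise the critical locus of $\varphi$ is non-empty with some local ramification index $e \geq 2$. Pick a prime $p$ dividing $e$ and a prime $\mathfrak{p}$ of $K$ of residue characteristic $p$. After choosing local \'etale coordinates at a critical point $c$, the map $\varphi$ has the normal form $z \mapsto z^e$, so the local preimage of a point $w$ close to the critical value $v = \varphi(c)$ lies in an extension of the form $K_{\mathfrak{p}}(w^{1/e})$, which is wildly ramified at $\mathfrak{p}$ because $p \mid e$.

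The remaining step is to propagate this local wild ramification into $K_{\infty,\varphi}(\alpha)$. One would argue, using the PCF hypothesis, that the backward orbit of $\alpha$ clusters $\mathfrak{p}$-adically near the postcritical set, so that for infinitely many $n$ there is a preimage $\beta \in \varphi^{-n}(\alpha)$ lying close enough to a critical value that the next-level extension $K_{\mathfrak{p}}(\beta)(\varphi^{-1}(\beta))/K_{\mathfrak{p}}(\beta)$ is wildly ramified; taking the compositum over $n$ then places $\mathfrak{p}$ in $S_\infty(\alpha)$ as a wildly ramified prime.

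The chief obstacle is making this $\mathfrak{p}$-adic backward equidistribution rigorous for arbitrary PCF endomorphisms, and then controlling whether wild ramification introduced at an intermediate level of the preimage tower persists in the compositum rather than being absorbed by subsequent tame steps. A purely Galois-theoretic alternative---forcing an infinite pro-$p$ quotient of the arboreal image and arguing it cannot be accounted for by tame inertia and Frobenius contributions at finitely many primes---immediately runs into Golod--Shafarevich-style examples of infinite tamely ramified pro-$p$ extensions of number fields, so any such approach would need to genuinely exploit the specific tree structure produced by PCF dynamics.
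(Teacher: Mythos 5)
The statement you were asked to prove is labeled Conjecture~\ref{wild} in the paper and is left entirely open there: the authors offer no proof, so there is no argument in the text against which to measure your sketch. Given that, the question is only whether your proposal contains the germ of a correct argument. You correctly identify the natural heuristic---wild ramification should arise at a prime $\mathfrak{p}$ whose residue characteristic $p$ divides some critical ramification index $e$, via the local picture $z\mapsto z^e$ and the Newton polygon---and you are candid that the hard part is forcing the backward orbit of $\alpha$ to cluster $\mathfrak{p}$-adically near a critical value at such a $\mathfrak{p}$ in a way that actually produces persistent wild ramification. This matches the mechanism the paper itself exploits in the \emph{opposite} direction in the proof of Theorem~\ref{P1converse}, so the instinct is sound.

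Two concrete gaps beyond the ones you flag. First, over a number field $K$ there are only \emph{finitely} many primes of a given residue characteristic $p$, so ``pick a prime $\mathfrak{p}$ of residue characteristic $p$'' does not by itself place $\mathfrak{p}$ in $S_\infty(\alpha)$; the clustering step would have to land in one of those finitely many specific primes, and neither the PCF hypothesis nor Lemma~\ref{divisors}-type results give you control of \emph{which} prime carries the collision, let alone its residue characteristic. (In the non-PCF setting the paper can afford to exclude finitely many primes; here you need to hit finitely many.) Second, your reduction of the everywhere-\'etale case to ``(quotients of) abelian varieties'' is too quick: the constraint is roughly $\chi(X)=0$, not an abelian structure theorem, and even on an abelian variety the field generated by $\bigcup\varphi^{-n}(\alpha)$ for non-torsion $\alpha$ is a Kummer-type extension whose wild ramification above every $p\mid\deg\varphi$ is a nontrivial assertion requiring its own proof. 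Your closing remark that a purely pro-$p$ Galois-theoretic approach founders on Golod--Shafarevich is correct and is a good sanity check that any successful argument must use the dynamics, not just the group theory. As a proof, however, the proposal remains a sketch with the central analytic step (the $\mathfrak{p}$-adic backward equidistribution near the critical locus at a prime of prescribed residue characteristic) unestablished.
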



\section{Proof of Theorem~\ref{th:finitely ramified}}\label{Proof}

Before giving the proof of Theorem~\ref{th:finitely ramified}, we state and prove an appropriate version of the Chevalley-Weil Theorem.

\begin{theorem}[Chevalley-Weil]
Let $\varphi: \mathscr{X} \to \mathscr{Y}$ be a finite \'{e}tale map of schemes defined over $A$, and let $P\in \mathscr{Y}(A)$. Then there is a finite, unramified extension $B/A$ of rings and a point $Q\in\mathscr{X}(B)$ such that $f(Q)=P$.
\end{theorem}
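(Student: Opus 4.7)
The plan is to construct $B$ and $Q$ directly as the pullback of $\varphi$ along $P$. Concretely, I would form the fiber product $\mathscr{X}_P := \mathscr{X} \times_{\mathscr{Y}} \mathrm{Spec}(A)$, where $\mathrm{Spec}(A) \to \mathscr{Y}$ is the morphism defined by the $A$-point $P$. Since finite \'{e}tale morphisms are stable under arbitrary base change, the projection $\mathscr{X}_P \to \mathrm{Spec}(A)$ is again finite and \'{e}tale, hence affine, so $\mathscr{X}_P = \mathrm{Spec}(C)$ for some finite \'{e}tale $A$-algebra $C$. Provided $\varphi$ is surjective (or at least $P$ lies in the image of $\varphi$, which will hold in the intended applications), $C$ is a nonzero ring.

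Next, I would invoke the standard structure theory of finite \'{e}tale algebras: such a $C$ decomposes as a finite product $C \cong B_1 \times \cdots \times B_r$ in which each factor $B_i$ is itself a finite \'{e}tale $A$-algebra. Choosing any index $i$ and setting $B := B_i$, the canonical projection $C \twoheadrightarrow B$ corresponds to an open-and-closed immersion $\mathrm{Spec}(B) \hookrightarrow \mathrm{Spec}(C) = \mathscr{X}_P$; composing with the first projection $\mathscr{X}_P \to \mathscr{X}$ yields a morphism $Q \colon \mathrm{Spec}(B) \to \mathscr{X}$, i.e.\ a $B$-point of $\mathscr{X}$. Since $B$ is a finite \'{e}tale $A$-algebra, the extension $A \hookrightarrow B$ is finite and unramified in the sense intended by the statement.

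Finally, I would verify the compatibility condition. By the universal property of the fiber product, the composite $\varphi \circ Q \colon \mathrm{Spec}(B) \to \mathscr{Y}$ equals the composition $\mathrm{Spec}(B) \to \mathrm{Spec}(A) \xrightarrow{P} \mathscr{Y}$; this is precisely $P$ viewed as a $B$-point via the structure map $A \hookrightarrow B$. The whole argument is essentially formal once the two standard inputs --- base-change stability of finite \'{e}taleness and the product decomposition of finite \'{e}tale algebras over a ring --- are taken for granted, so I do not anticipate a serious obstacle. The only point requiring care is nonemptiness of $\mathscr{X}_P$, which reduces to asking that $P$ lie in the image of $\varphi$; in the geometric settings of interest this is automatic, since $\varphi$ is a finite \'{e}tale cover and the relevant $\mathscr{Y}$ will be connected.
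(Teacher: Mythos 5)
Your approach coincides with the paper's: form the fiber product of $\varphi$ along $P$, observe by base-change stability that it is $\Spec$ of a finite \'{e}tale $A$-algebra, and read off $B$ and $Q$ directly from this construction. The additional step of decomposing $C$ into a product of factors and selecting one is harmless but unnecessary (the paper simply takes $B=C$, which is what the application actually wants, since one needs all preimages of $P$), and your explicit caveat about nonemptiness of the fiber is a point the paper leaves implicit.
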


\begin{proof}
First, let $\mathscr{Z}=\mathscr{X}\times_{\mathscr{Y}}\Spec(A)$ be the fiber product relative to the maps $\varphi:\mathscr{X}\to\mathscr{Y}$ and $P:\Spec(A)\to\mathscr{Y}$. Since $P$ and $\varphi$ are finite, so is the map $\mathscr{Z}\to\Spec(A)$, and hence $\mathscr{Z}=\Spec(B)$ for some finite $A$-module $B$ \cite[p.~91]{hartshorne}. By definition, the fiber product gives a map $\Spec(B)\to \scX$, i.e., a point $Q\in \scX(B)$ with $P=\varphi(Q)$. Here, we abuse notation and use $P$ to refer to the canonical map $\Spec(B)\to\mathscr{Y}$ induced by composing the canonical map $\Spec(B)\to\Spec(A)$ with $P:\Spec(A)\to\mathscr{Y}$. Finally, the fact that $B/A$ is unramified follows from the fiber product of \'{e}tale morphisms being \'{e}tale.
\end{proof}

\begin{proof}[Proof of Theorem \ref{th:finitely ramified}]
Let $R$ be a Dedekind domain, with fraction field $K$, let $X/K$ be an irreducible, smooth, projective variety, let $\varphi:X\to X$ be a PCF morphism defined over $K$, let $W\subseteq X$ be a set as in Definition~\ref{pcfdef}, and let $\alpha\in W(K)$. We now describe a finite set of primes which will contain all primes ramifying in any of the extensions $K_n/K$. Specifically, let $S\subseteq \Spec(R)$ be the set of places $\mfp$ for which $X$ has bad reduction modulo $\mfp$, $\varphi$ has bad reduction modulo $\mfp$, the ramification subscheme of $\varphi$ has a fibral component modulo $\mfp$, or $\alpha$ collides with the complement of $W$ modulo $\mfp$. In other words, if $R_S$ is the localization of $R$ at $S$, then $X$ is the generic fiber of a scheme $\scX\to\Spec(R_S)$, $W$ is the generic fiber of a scheme $\scW\to\Spec(R_S)$, $\alpha$ extends to a morphism $\widetilde{\alpha}:\Spec(R_S)\to \scW$, and $\varphi$ extends to an \'{e}tale morphism $\varphi:\scW\to\scX$ over $R_S$. Now, note that if $\scU=\varphi^{-n}(\scW)$, then since the inclusion map $\scU\to\scW$ is \'{e}tale, so is the map $\varphi^n:\scU\to\scW$.
Theorem~\ref{th:finitely ramified} is the application of the Chevalley-Weil Theorem to the point $\widetilde{\alpha}\in \scW(R_S)$ and the map $\varphi^n:\scU\to\scW$.
\end{proof}

\begin{remark}The ramification subscheme of $\varphi$ mod $\p$ has a fibral component if and only if the reduction of $\varphi$ mod $\p$ is inseparable.\end{remark}


\begin{theorem} \label{P1}If $X = \PP^1$ then Theorem~\ref{th:finitely ramified} holds with $W = X$.\end{theorem}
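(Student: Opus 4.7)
My plan is to reduce the postcritical case, which is the only case not already handled by Theorem~\ref{th:finitely ramified}, to the generic case via a finite extension of $K$. For $\alpha \in W(K)$, the result is immediate from Theorem~\ref{th:finitely ramified}, so I assume $\alpha \in P_\varphi := X \setminus W$. On $\PP^1$, the set $P_\varphi$ is \emph{finite}---this is the geometric input that makes the reduction work. The structural fact I will use is that $\varphi^{-1}(W) \subseteq W$ is equivalent to $\varphi(P_\varphi) \subseteq P_\varphi$, so that $P_\varphi$ is forward-invariant: once a forward orbit enters $P_\varphi$ it remains there. Consequently, for any $\beta \in \varphi^{-n}(\alpha)$ the orbit $\beta, \varphi(\beta), \ldots, \varphi^n(\beta) = \alpha$ has a smallest index $i_0$ with $\varphi^{i_0}(\beta) \in P_\varphi$ (this index exists because $\alpha \in P_\varphi$). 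Either $i_0 = 0$ and $\beta \in P_\varphi$, or $\varphi^{i_0-1}(\beta)$ lies in the finite set $\varphi^{-1}(P_\varphi) \cap W$, and $\beta$ is an $(i_0 - 1)$-fold preimage of this point.

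Next, I will enumerate $\varphi^{-1}(P_\varphi) \cap W = \{\gamma_1, \ldots, \gamma_s\}$ and set $F = K(\varphi^{-1}(P_\varphi))$. Since $\varphi$ is separable (PCF morphisms are separable, as the paper notes), the critical locus of $\varphi$ is reduced and defined over $K$, so $P_\varphi$ and $\varphi^{-1}(P_\varphi)$ consist of points in $K^{\mathrm{sep}}$. Hence $F/K$ is a finite separable extension, and the integral closure of $R$ in $F$ is again a Dedekind domain. Each $\gamma_j$ lies in $W(F)$, so applying Theorem~\ref{th:finitely ramified} over $F$ to each $\gamma_j$ produces a finite set $T_j$ of primes of $F$ outside of which $F_\infty(\gamma_j)/F$ is unramified. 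Setting $T = \bigcup_j T_j$ (still finite) and letting $L = F \cdot F_\infty(\gamma_1) \cdots F_\infty(\gamma_s)$ be the compositum, the extension $L/F$ is unramified outside $T$.

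The observation from the first paragraph shows that every element of $\bigcup_n \varphi^{-n}(\alpha)$ lies either in $P_\varphi \subseteq F$ or in $\varphi^{-m}(\gamma_j) \subseteq F_\infty(\gamma_j)$ for some $m$ and $j$, hence in $L$. Therefore $K_\infty(\alpha) \subseteq L$. Combined with the fact that the finite extension $F/K$ ramifies at only finitely many primes $T'$ of $K$, this shows $K_\infty(\alpha)/K$ is ramified at only finitely many primes of $K$ (those lying below $T$, together with $T'$), proving that $W$ may be taken to be all of $\PP^1$.

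The main obstacle is conceptual rather than technical: the argument depends essentially on $\varphi^{-1}(P_\varphi) \cap W$ being a \emph{finite} set, which follows from $P_\varphi$ being finite on $\PP^1$. In higher dimension this intersection is typically positive-dimensional and a single application of Theorem~\ref{th:finitely ramified} to finitely many auxiliary points no longer suffices; this aligns with the paper's remark that a fundamentally different input, such as the announced work of Levy and Tucker, is needed to push the result to arbitrary $X$. The remaining verifications---that $F/K$ is finite and separable, and that the compositum argument is valid---are routine.
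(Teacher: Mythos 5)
Your proof is correct, and it takes a route that is organized differently from the paper's. The paper's proof splits on whether $\alpha$ is periodic: in the non-periodic case it finds an $m$ (depending on $\alpha$) such that $\varphi^{-m}(\alpha)$ avoids the postcritical set and applies Theorem~\ref{th:finitely ramified} over $K_m(\alpha)$; in the periodic case it first replaces $\varphi$ by an iterate to make $\alpha$ fixed, then reduces to the preceding case via the non-periodic points of $\varphi^{-1}(\alpha)\setminus\{\alpha\}$. Your argument avoids the case split entirely by exploiting the forward-invariance $\varphi(P_\varphi)\subseteq P_\varphi$ (equivalent, as you note, to $\varphi^{-1}(W)\subseteq W$): every element of $\bigcup_n\varphi^{-n}(\alpha)$ either stays inside the finite set $P_\varphi$ or first exits it through the finite set $\varphi^{-1}(P_\varphi)\cap W$, whose elements then serve as the auxiliary points to which Theorem~\ref{th:finitely ramified} is applied. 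A pleasant consequence is that your auxiliary base extension $F=K(\varphi^{-1}(P_\varphi))$ and auxiliary points $\gamma_j$ are intrinsic to $\varphi$ and independent of $\alpha$, whereas the paper's depend on $\alpha$. Both proofs hinge on the same geometric input --- the finiteness of $P_\varphi$ on $\PP^1$ --- which you correctly identify as the obstruction to pushing the argument to higher dimension. One small caveat: your parenthetical that the critical locus is ``reduced and defined over $K$'' is not quite right (the critical divisor typically has multiplicities), and the separability of $F/K$ in positive characteristic deserves more care than the sentence gives; but these are peripheral, and the paper's own phrase ``$K_m(\alpha)/K$ is finitely ramified since it is a finite extension'' carries the identical implicit assumption.
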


\begin{proof}The proof of Theorem~\ref{th:finitely ramified} explicitly constructs $W$ to be $X$ minus the postcritical set of $\varphi$, which when $X = \PP^1$ and $\varphi$ is PCF is a finite set of points. The critical locus $C_\varphi$ consists of a finite set of points, and there exists some constant $m$ such that if $\beta \in C_\varphi$ then $\alpha = \varphi^i(\beta)$ for some $i \leq m$ or $\alpha$ is not in the forward orbit of $\beta$.

Suppose first that $\alpha$ is not a periodic point of $\varphi$. Then $\varphi^{-m}(\alpha)$ consists of a finite set of points, none of which is of the form $\varphi^i(\beta)$ for any $\beta \in C_\varphi$. Thus, $K_\infty(\alpha)/K_m(\alpha)$ is finitely ramified. $K_m(\alpha)/K$ is finitely ramified since it is a finite extension, and therefore $K_\infty(\alpha)/K$ is finitely ramified, as required.

If $\alpha$ is periodic, then we first observe that $K_n(\alpha) \subseteq K_m(\alpha)$ if $n < m$, and therefore $$K_{\infty, \varphi}(\alpha) = \bigcup_{i = 1}^\infty K_{i, \varphi}(\alpha) = \bigcup_{i = 1}^\infty K_{ni, \varphi}(\alpha) = K_{\infty, \varphi^n}(\alpha)$$ for every $n \geq 1$. Since $K_{\infty, \varphi}(\alpha) = K_{\infty, \varphi^n}(\alpha)$, we may freely replace $\varphi$ with an iterate, and in particular we may assume that $\alpha$ is fixed. But now $K_n(\alpha)$ is the compositum of $K_{n-1}(\beta)$ over all $\beta \in \varphi^{-1}(\alpha) \setminus \{\alpha\}$. Since $\beta$ is not periodic, $K_\infty(\beta)$ is finitely ramified, and this implies $K_\infty(\alpha)$ is finitely ramified.\end{proof}

\section{Examples}

The proof of Theorem~\ref{th:finitely ramified} is sufficiently explicit that a set of primes outside of which the preimage extensions are unramified can often be presented with relative ease.
\begin{example} \label{P2ex}
Dupont~\cite{dupont} produces an example of a quadratic map $\varphi:\PP^2\to\PP^2$ which is post-critically finite, namely
\[\varphi[x:y:z]=\left[(x-y+z)^2:(x+y-z)^2:(-x+y+z)^2\right].\]
One checks that the post-critical locus of this morphism is precisely that defined by
\[xyz(x-y)(y-z)(z-x)=0.\]
Thus, in the proof of Theorem~\ref{th:finitely ramified}, we may take $W\subseteq\PP^2$ to be the complement of this divisor.

Given a number field $K$, and a point $P\in W(K)$, let $S$ be any set of places of $K$ containing all places above 2, above the coordinates of $P$, and above their pairwise differences. Then if $\mathcal{O}_{K, S}$ is the set of $S$-integers of $K$, the morphism $\varphi:W\to W$ is the generic fiber of an \'{e}tale map $\varphi:\mathcal{W}\to \mathcal{W}$ of $\mathcal{O}_{K, S}$-schemes, and $P$ extends to a point $P\in \mathcal{W}(\mathcal{O}_{K, S})$. It follows from the proof of Theorem~\ref{th:finitely ramified} that the extensions $K(\varphi^{-n}(P))$ are all unramified above primes outside of $S$.

Incidentally, we note that, given a set $S$, there are only finitely many points $P\in W(K)$ for which this set of places suffices, which in this case follows from the finiteness of solutions to $S$-unit equations. More generally, if $W\subseteq\PP^2$ is an affine open set containing infinitely many $S$-integral points, this places substantial restrictions on the complement $\PP^2\setminus W$ (in our case, the postcritical set); see, e.g., work of Corvaja and Zannier~\cite{cz}.

Observe that the periodic postcritical components are $x = y$, $y = z$, and $z = x$, which $\varphi$ permutes in a $3$-cycle. Let us look at how $\varphi^3$ acts on these components. Since $\varphi$ has a $\Z/3\Z$-automorphism by the coordinate change $x \mapsto y \mapsto z \mapsto x$, it suffices to look at the action of $\varphi^3$ on just one of these components. Let us look at the component $x = y$, and let us dehomogenize by setting $z = 1$. It is a straight forward computation that $$\varphi^3(x) = \frac{(4x^2 - 4x - 1)^4}{(16x^4 - 32x^3 + 40x^2 - 24x + 1)^2}$$

There are $14$ critical points, counted with multiplicity: the two roots of $4x^2 - 4x - 1$ are quadruple zeros and triple critical points, the roots of $16x^4 - 32x^3 + 40x^2 - 24x + 1$ are double poles and simple critical points, $\infty$ is a simple critical point (the top two terms of both numerator and denominator are the same), and after factoring those out we obtain $1/2$ as a triple critical point. $0, 1/2, \infty$ all map to the fixed point $1$. More specifically, the critical values are $0, 1, \infty$, which are precisely the intersections of the other postcritical components with $x = y$. As expected based on the in-progress work in \cite{levy-tucker}, the restriction of $\varphi^3$ to $x = y$ is PCF. If we pull back a point on this line, then again we obtain finite ramification.
\end{example}

\begin{example} \label{Tch-ex}
As another example, consider the generalized Tchebyshev map $\varphi:\PP^2\to\PP^2$ given by
\[\varphi[x:y:z]=[x^2-2yz:y^2-2xz:z^2],\]
whose postcritical set consists of the quintic curve
\[z(x^2y^2-4x^3z-4y^3z+18xyz^2-27z^4)=0.\]
If $W$ is the complement of this curve, and $P\in W(K)$, then there is a finite set $S$ of primes such that over $\mathcal{O}_{K, S}$, $\varphi$ extends to an \'etale map $\varphi:\mathcal{W}\to\mathcal{W}$, and $P$ extends to a point $\mathcal{W}(\mathcal{O}_{K, S})$. The extensions $K(\varphi^{-n}(P))$ are again unramified above primes outside of $S$. It follows again from the result of Corvaja and Zannier~\cite{cz} that a given set $S$ will work for only finitely many points.

In this example, there is in fact a simpler way of accessing this fact (noting that computing the set of primes modulo which a given point intersects the above quintic is perhaps not trivial). The Tchebyshev example above fits into a commutative diagram
\[\begin{CD}
\mathbb{G}_\mathrm{m}^2 @>{Q\mapsto Q^2}>> \mathbb{G}_\mathrm{m}^2\\
@V{\pi}VV @VV{\pi}V\\
\PP^2 @>>{\varphi}> \PP^2
\end{CD}\]
for $\pi(x, y)=\left(x+y+\frac{1}{xy}, \frac{1}{x}+\frac{1}{y}+xy\right)$, which is ramified exactly where $x=y$, $x=y^{-2}$, or $y=x^{-2}$. Thus if $P\in \PP^2(K)$ satisfies $P=\pi(Q)$, and $S$ is a set of places of $K$ for which $Q$ is an $S$-unit, then the extension $K(\varphi^{-n}(P))$, contained in $K(Q^{1/2^n})$, will be unramified outside of $S$.
\end{example}

\section{A characterization of PCF morphisms on $\mathbb{P}^1$}

Let us now turn to the proof of Theorem \ref{P1converse}; throughout this section, unless otherwise noted, we assume that $K$ is a number field. The key ingredient of the proof of Theorem~\ref{th:finitely ramified} is that a given $\alpha \in X(K)$ will only collide with a proper closed algebraic subset of $X$ modulo finitely many primes. If $f: X \to X$ is postcritically infinite, then we expect $\alpha$ to collide with the postcritical set modulo infinitely many primes. However, such collision modulo a prime alone is insufficient to guarantee ramification of pre-image fields at that prime.

\begin{example}Let $p$ be a prime number, $f(z) = z(z-p)$, and $\alpha = 0$. The unique finite critical point is $p/2$, with critical value $-p^2/2$. Although $-p^2/2 \equiv 0 \mod p$, we have $\Q(f^{-1}(0)) = \Q$, and so there is no ramification in $\Q(f^{-1}(0))$.\end{example}

It is not hard to prove that, if $X = \PP^1$, then for every infinite postcritical orbit and every $\alpha$ that does not lie on it, there exist infinitely many primes modulo which $\alpha$ does lie in the orbit. Indeed, by results of Silverman~\cite{silverman}, this is true of every infinite forward orbit. Our proof of Theorem \ref{P1converse} requires the following stronger version of this:

\begin{lemma}\label{divisors}Let $\varphi(z): \PP^1_K \to \PP^1_K$, let $e \geq 2$ be an integer, suppose that $0$ is not a postcritical point under $\varphi$,  suppose that $a\in\mathbb{P}^1(K)$ is not preperiodic, and let $S$ be any finite set of places of $K$. Then there exist $\mathfrak{p}\not\in S$ and $n\geq 0$ such that $v_{\mathfrak{p}}(\varphi^n(a))$ is positive and not divisible by~$e$. 
\end{lemma}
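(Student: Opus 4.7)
The plan is to argue by contradiction: assume that for every $\mathfrak{p}\notin S$ and every $n\geq 0$, $v_{\mathfrak{p}}(\varphi^n(a))$ is either non-positive or divisible by $e$, and aim to derive a contradiction via Faltings' theorem. The strategy is to extract from the hypothesis an $e$-th power structure along the forward orbit of $a$, and then to build infinitely many rational points on a single curve of genus at least~$2$.

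Fix a large integer $n_0$ (to be chosen below) and write $\varphi^{n_0}$ in homogeneous coordinates as $\varphi^{n_0}[X:Y]=[\tilde{A}(X,Y):\tilde{B}(X,Y)]$ with $\tilde{A},\tilde{B}\in\OO_K[X,Y]$ coprime homogeneous of degree $d^{n_0}$, where $d=\deg\varphi$. Enlarge $S$ (still finitely) by adjoining all primes of bad reduction for $\varphi$ and enough primes to make $\OO_{K,S}$ a principal ideal domain. For each $m\geq 0$ write $\varphi^m(a)=[P_m:Q_m]$ with $P_m,Q_m\in\OO_K$ coprime; at every $\mathfrak{p}\notin S$ the forms $\tilde{A},\tilde{B}$ reduce to coprime forms, so $\tilde{A}(P_m,Q_m)$ and $\tilde{B}(P_m,Q_m)$ have no common $\mathfrak{p}$-divisor. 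Hence the $\mathfrak{p}$-adic valuation of the numerator of $\varphi^{n_0+m}(a)$ equals $v_\mathfrak{p}(\tilde{A}(P_m,Q_m))$, and the standing hypothesis translates into: the ideal $(\tilde{A}(P_m,Q_m))\OO_{K,S}$ is an $e$-th power for every $m$. Because $\OO_{K,S}$ is a PID, this means $\tilde{A}(P_m,Q_m)=u_m c_m^e$ for some $u_m\in\OO_{K,S}^{\times}$ and $c_m\in\OO_{K,S}$. By Dirichlet's unit theorem, $\OO_{K,S}^{\times}/(\OO_{K,S}^{\times})^e$ is finite, so by pigeonhole there is a fixed $u\in\OO_{K,S}^{\times}$ along with an infinite set of indices $m$ on which $u_m$ lies in the coset $u\cdot(\OO_{K,S}^{\times})^e$. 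Setting $L=K(u^{1/e})$ and absorbing an $e$-th power factor into $c_m$, I obtain $\tilde{A}(P_m,Q_m)=d_m^e$ in $L$ for infinitely many $m$.

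Now consider the weighted projective curve $\mathcal{F}\subseteq\PP(e,e,d^{n_0})$ cut out by $\tilde{A}(X,Y)=Z^e$, and let $\widetilde{\mathcal{F}}$ denote its normalization, a smooth, geometrically irreducible projective curve over~$L$. The projection $[X:Y:Z]\mapsto[X:Y]$ realizes $\widetilde{\mathcal{F}}\to\PP^1_L$ as a degree-$e$ cover, totally ramified at each root of $\tilde{A}$. The hypothesis that $0$ is not postcritical guarantees that each of the $d^{n_0}$ preimages of $0$ under $\varphi^{n_0}$ is simple, so $\tilde{A}$ has exactly $d^{n_0}$ distinct roots on $\PP^1$; Riemann--Hurwitz then gives
\[
2\,g(\widetilde{\mathcal{F}})-2 \;\geq\; -2e+d^{n_0}(e-1),
\]
which exceeds $2$ once $n_0$ is sufficiently large. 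Faltings' theorem therefore forces $\widetilde{\mathcal{F}}(L)$ to be finite. But each $m$ in the infinite subsequence above produces the $L$-point $[P_m:Q_m:d_m]\in\mathcal{F}(L)\subseteq\widetilde{\mathcal{F}}(L)$, and these project under $[X:Y:Z]\mapsto[X:Y]$ to the pairwise distinct orbit points $\varphi^m(a)\in\PP^1$ (pairwise distinct since $a$ is not preperiodic); hence they are themselves pairwise distinct on $\widetilde{\mathcal{F}}$, contradicting Faltings.

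The main obstacle I expect is the bookkeeping required to justify the passage from the valuation-theoretic hypothesis to the clean algebraic statement that $\tilde{A}(P_m,Q_m)$ is an $e$-th power in $\OO_{K,S}$, simultaneously handling good reduction of $\tilde{A},\tilde{B}$, the class group of $\OO_K$, and the finite quotient $\OO_{K,S}^{\times}/(\OO_{K,S}^{\times})^e$. Working inside the weighted projective space $\PP(e,e,d^{n_0})$ is what allows the argument to proceed uniformly without any assumption relating $e$ and $d^{n_0}$; once this setup is in place, the Riemann--Hurwitz and Faltings steps are routine, and the finitely many degenerate $m$ (for which $\varphi^m(a)=\infty$ or $\varphi^{n_0+m}(a)=0$) are harmlessly excluded using that $a$ is not preperiodic.
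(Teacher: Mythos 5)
Your proposal is correct, and it follows the same basic strategy as the paper's proof: negate the conclusion, translate the divisibility hypothesis into an $e$-th-power structure on the numerator of $\varphi^n(a)$ over $\OO_{K,S}$, pigeonhole on the finite group $\OO_{K,S}^\times/(\OO_{K,S}^\times)^e$, and then apply a Faltings-type finiteness theorem to the resulting superelliptic equation to contradict the infiniteness of the forward orbit.

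The one real difference is in how the finiteness step is executed. The paper replaces $\varphi$ by an iterate so that its numerator $F$ has at least four distinct simple roots (using, exactly as you do, that $0$ not being postcritical forces all roots of the numerator to be simple), and then invokes Darmon--Granville's Theorem~$1'$ as a black box to conclude that $s^{-1}F(\alpha_{n-1},\beta_{n-1})=y^e$ has finitely many solutions with bounded $\gcd$. You instead fix a single large iterate $\varphi^{n_0}$, build the curve $\tilde{A}(X,Y)=Z^e$ explicitly, verify irreducibility, compute the genus via Riemann--Hurwitz, and apply Faltings directly. Since Darmon--Granville is itself a consequence of Faltings applied to such a cover, you have essentially unwound the black box; your version is a little more self-contained, while the paper's is shorter at the cost of a citation. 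Two minor remarks: (1) the passage to $L=K(u^{1/e})$ is unnecessary --- you could equally work with the twisted curve $u^{-1}\tilde{A}(X,Y)=Z^e$ over $K$, as the paper in effect does with its $s^{-1}F$; (2) requiring $P_m,Q_m\in\OO_K$ to be \emph{globally} coprime is too strong when the class group is nontrivial --- as you implicitly realize by enlarging $S$ to make $\OO_{K,S}$ a PID, it suffices (and is only possible in general) to take them coprime in $\OO_{K,S}$, which matches the paper's device of bounding $\alpha_n\OO_K+\beta_n\OO_K$ by a fixed ideal.
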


\begin{proof}
Before beginning the proof, we note that it is enough to establish the claim for some iterate of $\varphi$. If $0$ is not post-critical for $\varphi$, then it is not post-critical for $\varphi^k$ either, for any $k\geq 1$. Since every iterate of $\varphi^k$ is also an iterate of $\varphi$, the result for $\varphi$ now follows from that for $\varphi^k$. We may replace $\varphi$ with an iterate, then, to assume that $\deg(\varphi)\geq 5$.
Note also that the numerator of $\varphi$ has no repeated roots, since such a root $\beta$ would satisfy $\varphi(\beta)=\varphi'(\beta)=0$, which contradicts our hypothesis that $0$ is not post-critical for $\varphi$. Since $\infty$ is also not mapped by $\varphi$ to 0 with multiplicity, we may assume that the numerator of $\varphi$ (written in lowest terms as a quotient of two polynomials) has at least 4 distinct simple roots.

Now fix $\varphi$, $e$, and $a$ as in the statement and suppose, toward a contradiction, that there is some finite set $S$ of places of $K$ such that for all $\mathfrak{p}\not\in S$ and all $n\geq 0$, if $v_{\mathfrak{p}}(\varphi^n(a))>0$ then $e\mid v_{\mathfrak{p}}(\varphi^n(a))$. We will enlarge our set $S$ below, which clearly does not disrupt this property.

Write $\varphi^n(a_0)=\alpha_n/\beta_n$ with $\alpha_n, \beta_n\in \mathcal{O}_K$ chosen so that $\alpha_n\mathcal{O}_K+\beta_n\mathcal{O}_K$ all divide some fixed ideal $I$ (we can do this by the finiteness of the class group). Write $\varphi(x/y)=F(x, y)/G(x, y)$ with integral coefficients, and enlarge the set $S$ of places enough that $\operatorname{Res}(F, G)\in \mathcal{O}_{K, S}^\times$ and such that $\mathcal{O}_{K, S}$ is a PID in which $I$ generates the trivial ideal.

For a given $n$, write $\varphi^n(a_0)\mathcal{O}_{K, S}=\mathfrak{a}/\mathfrak{b}$, where $\mathfrak{a}$ and $\mathfrak{b}$ are coprime ideals in $\mathcal{O}_{K, S}$. Note that, by our construction of $S$, we have
\[F(\alpha_{n-1}, \beta_{n-1})\mathcal{O}_{K, S}=\mathfrak{a}.\]
We have assumed that $e\mid v_{\mathfrak{p}}(\varphi^n(a_0))$ for every $n\geq 0$ and every $\mathfrak{p}\not\in S$ with $v_{\mathfrak{p}}(\varphi^n(a_0))>0$, and so (since $\mathcal{O}_{K, S}$ is a PID) we have
\[F(\alpha_{n-1}, \beta_{n-1})=sy^e,\]
for some $S$-unit $s$. Choosing a finite set $\msS$ of coset representatives of $\mathcal{O}_{K, S}^\times/(\mathcal{O}_{K, S}^\times)^e$, we in fact have that for each $n\geq 0$,
\begin{equation}\label{eq:dgeq}
s^{-1}F(\alpha_{n-1}, \beta_{n-1})=y^e,
\end{equation}
for some $y\in K$ and some $s\in \msS$. Since $\msS$ is finite, there is one particular $s\in \msS$ such that~\eqref{eq:dgeq} has a solution with $y\in K$ for all $n$ in some infinite set $Z$. Since $\alpha_n \mathcal{O}_K+\beta_n \mathcal{O}_K$ all divide some fixed ideal $I$, and since we have that $F(x, 1)$ has four distinct simple roots, we may apply a result of Darmon and Granville~\cite[Theorem~1$'$]{darmon-granville} to conclude that there are only finitely many distinct values $\varphi^n(a_0)=\alpha_n/\beta_n$ with $n\in Z$. But $Z$ is infinite, and this would mean that $a_0$ is preperiodic for $\varphi$. The lemma follows from this contradiction.
\end{proof}

From Lemma \ref{divisors}, we now prove Theorem \ref{P1converse}.

\begin{proof}[Proof of Theorem~5]
We first make some reductions of the theorem. Note that the compositum of a finite extension of $K$ with a finitely-ramified extension will again be finitely ramified, so it suffices to prove the theorem after adjoining some algebraic values to $K$. In particular, we will assume that $\varphi$ has a critical point $\zeta\in\mathbb{P}^1(K)$ whose orbit is infinite, and we denote by $e\geq 2$ the local degree of $\varphi$ at this point.

We write $K_n(\alpha)=K(\varphi^{-n}(\alpha))$. If $\varphi^k(\beta)=\alpha$, then $K_n(\beta)\subseteq K_{n+k}(\alpha)$, so it suffices to prove the theorem after replacing $\alpha$ with any of its preimages. Since $\alpha$ is not exceptional, it has infinitely many preimages, only finitely many of which can be postcritical. Without loss of generality, then, we will suppose that $\alpha$ is not post-critical.
Finally, making a change of coordinates if necessary, we assume for convenience that $\alpha=0$ and that $\varphi(\infty)=\infty$.
When we write \[\varphi([x:y])=\frac{F(x, y)}{G(x, y)},\]
with $F$ and $G$ coprime homogeneous forms, this last assumption gives that $G(x, y)$ is divisible by $y$.

Toward a contradiction, suppose that there exists a finite set $S$ of places of $K$ such that the fields $K_n(0)/K$ are all unramified outside of $S$. We will replace $S$ with a larger finite set such that the coefficients of $F$ and $G$ are $S$-integral; $\operatorname{Res}(F, G)$ is an $S$-unit; $\zeta$ and $\varphi(\zeta)$ are $S$-integral; and the constant terms of $G(x+\zeta, 1)$ and $F(1, x)$, and the quantity $\varphi^{(e)}(\zeta)/e!$, are $S$-units, where $\varphi^{(e)}$ denotes the $e$th derivative of $\varphi$ evaluated at $\zeta$ (which is necessarily non-zero). Note that it follows from this that $\varphi(z+\zeta)$ admits a power series expansion with $S$-integral coefficients of the form
\[\varphi(z+\zeta)=\varphi(\zeta)+\frac{\varphi^{(e)}(\zeta)}{e!}z^e+O\left(z^{e+1}\right),\]
with the coefficient of $z^e$ an $S$-unit.
Also note that if we set
\[F_{n+1}(x, y)=F(F_n(x, y), G_n(x, y))\]
and 
\[G_{n+1}(x, y)=G(F_n(x, y), G_n(x, y))\]
then $\operatorname{Res}(F_n, G_n)$ is always an $S$-unit, and $F_n$ and $G_n$ always have $S$-integral coefficients.

By Lemma~12 there exists a prime $\mathfrak{p}\not\in S$ such that $v_{\mathfrak{p}}(\varphi^n(\zeta))$ is positive and not divisible by $e$. We have
\[0<v_{\mathfrak{p}}(\varphi^n(\zeta))=v_{\mathfrak{p}}(F_{n-1}(\varphi(\zeta),1))-v_{\mathfrak{p}}(G_{n-1}(\varphi(\zeta),1)).\]
But $F_{n-1}(\varphi(\zeta), 1)$ and $G_{n-1}(\varphi(\zeta), 1)$ are $S$-integers with no common factor, and so in fact
\[v_{\mathfrak{p}}(\varphi^n(\zeta))=v_{\mathfrak{p}}(F_{n-1}(\varphi(\zeta),1)).\]

Let $\mathfrak{P}$ be a prime of $K_{n}(0)$ dividing $\mathfrak{p}$. Note that, since $\mathfrak{p}$ does not ramify, we have $v_{\mathfrak{P}}(x)=v_{\mathfrak{p}}(x)$ for all $x\in K$. 

If we write $F_n(x, 1)=s_nx^{d^n}+\cdots$ for each $n$, then a simple induction using the fact that $y$ divides $G(x, y)$ shows that $s_n=s_1^{1+d+d^2+\cdots+d^{n-1}}$. In particular, this value is an $S$-unit for all $n$ (given our assumption that it is for $n=1$).

Factoring in $K_{n-1}(0)$, we have
\[F_{n-1}(x, y)=s_{n-1}(x-\delta_1 y)\cdots (x-\delta_{d^{n-1}} y),\]
where $\delta_1, ..., \delta_{d^{n-1}}$ are necessarily $S$-integral. 
If $e\mid v_{\mathfrak{P}}(\varphi(\zeta)-\delta_i)$ for each $i$, then the same would be true for \[v_{\mathfrak{P}}(F_{n-1}(\varphi(\zeta), 1))=\sum_{i=1}^rv_{\mathfrak{P}}(\varphi(\zeta)-\delta_i),\] which we know not to be the case. So we have some $i$ with $e\nmid v_{\mathfrak{P}}(\varphi(\zeta)-\delta_i)>0$.

Now consider the function $\psi(z)=\varphi(z+\zeta)-\delta_i$.
The roots of $\psi(z)$ are all $K_n(0)$-rational, since each is of the form $\gamma-\zeta$, with $\gamma$ an $n$th preimage of $0$ (more specifically, a first preimage of $\delta_i$).
By hypothesis, this function admits a power series expansion which converges on the $\mathfrak{P}$-adic open unit disk. On the other hand, we know that this power series has the form
\[\psi(z)=\left(\varphi(\zeta)-\delta_i\right)+\frac{\varphi^{(e)}(\zeta)}{e!}z^e+O(z^{e+1}),\]
where the remaining terms all have $\mathfrak{P}$-integral coefficients. The Newton Polygon for $\psi$, then, has an initial segment of length $e$ and slope $-v_{\mathfrak{P}}(\varphi(\zeta)-\delta_i)/e$, followed by an infinite segment of slope 0. Consequently, one of the roots of $\psi$ has valuation $v_{\mathfrak{P}}(\varphi(\zeta)-\delta_i)/e$ (indeed, $e$ of them do), but such roots are $K_n(0)$-rational, and thus have integral $\mathfrak{P}$-valuation. We conclude that $v_{\mathfrak{P}}(\varphi(\zeta)-\delta_i)$ is divisible by $e$. This is a contradiction.
\end{proof}

\begin{remark}In higher dimension, if $\alpha$ is exceptional, then its backward image is contained in a positive-codimension closed subvariety $Y \subsetneq X$. In line with Conjecture~\ref{converseconj}, we conjecture that whenever the restriction of $\varphi$ to $Y$ is postcritically infinite, we again get ramification at infinitely many primes. The reason for the restriction to non-exceptional points in the statement of Conjecture~\ref{converseconj} is that even when $\varphi$ is postcritically infinite, its restriction to $Y$ may be PCF.\end{remark}


If Lemma~\ref{divisors} is true for a higher-dimensional variety $X$, it is not too difficult to generalize Theorem~\ref{P1converse}. The proof of Theorem~\ref{P1converse} can be reinterpreted in Newton polygon language, which readily generalizes to higher dimension; for a survey, see~\cite{rabinoff}.

The other step in the proof requires showing that a given $\alpha$ collides with one postcritical component at a time; this is why $S$ contains the set of primes where two critical points of $\varphi$ collide. To generalize this to higher dimension, first, we note that at all but finitely many primes --- indeed, generically, for all primes --- no two components of the critical locus, a finite union of irreducible hypersurfaces, will have the same reduction.

Now, we need to deal with the case in which $\alpha$ collides with the intersection of $\varphi^{n}(C_1)$ and $\varphi^{n}(C_2)$ where $C_1$ and $C_2$ are two distinct critical components, mapping to their images with local degrees $e_1$ and $e_2$. In that case, $\alpha$ would have a preimage of multiplicity $e_1 e_2$, which is clearly divisible by $e_1$. More generally, the proper intersection of critical components of mapping degrees $e_1, \ldots, e_k$ maps to its preimage with degree $e_1\ldots e_k$; improper intersection will only happen modulo finitely many primes, which we can exclude. We can thus make $e_1$ the $e$ we use in the proof of Theorem~\ref{P1converse}.

The difficulty is in generalizing Lemma~\ref{divisors}. To obtain the same argument, we first must assume that the Bombieri-Lang conjecture is true. Although the conjecture merely says that general-type varieties have non-Zariski dense sets of rational points, rather than finite sets, in the particular example of $sy^e = f(\varphi(C))$, where $C$ is a hypersurface, $\varphi$ is its image, and $f(\varphi(C))$ is a normalized defining polynomial for $\varphi(C)$, Bombieri-Lang does in fact imply finiteness.

The problem is that we cannot place all the defining polynomials for the components of the postcritical locus into one equation. The components will have growing degrees, so they will come from Chow varieties of growing dimension. The converse, i.e.\ colliding a specified hypersurface with a Zariski-dense orbit of points, follows trivially from Bombieri-Lang, but for our purposes, we need to show that a specified non-postcritical $\alpha$ collides with the postcritical locus, to degrees not divisible by certain mapping degrees, modulo infinitely many primes. We cannot make any use of stronger conjectures, such as a uniform bound on the number of rational points of general-type varieties: such conjectures only apply for families with fixed discrete parameters, just as the uniform bound conjecture for rational points of general-type curves lets the number of points depend on the genus.

\bibliographystyle{halpha}
\bibliography{bibliography}

\begin{thebibliography}{AHM05}

\bibitem[AHM05]{aitken}
Wayne Aitken, Farshid Hajir, and Christian Maire.
\newblock Finitely ramified iterated extensions.
\newblock {\em Int. Math. Res. Not.}, (14):855--880, 2005.

\bibitem[Bec89]{beckmann2}
Sybilla Beckmann.
\newblock Ramified primes in the field of moduli of branched coverings of
  curves.
\newblock {\em J. Algebra}, 125(1):236--255, 1989.

\bibitem[Bec91]{beckmann1}
Sybilla Beckmann.
\newblock On extensions of number fields obtained by specializing branched
  coverings.
\newblock {\em J. Reine Angew. Math.}, 419:27--53, 1991.

\bibitem[BJ07]{boston-jones1}
Nigel Boston and Rafe Jones.
\newblock Arboreal {G}alois representations.
\newblock {\em Geom. Dedicata}, 124(1):27--35, 2007.

\bibitem[BJ09]{boston-jones2}
Nigel Boston and Rafe Jones.
\newblock The image of an arboreal {G}alois representation.
\newblock {\em Pure and Applied Mathematics Quarterly}, 5(1):213--225, 2009.

\bibitem[CH12]{cullinan}
John Cullinan and Farshid Hajir.
\newblock Ramification in iterated towers for rational functions.
\newblock {\em Manuscripta Math.}, 137(3--4):273--286, 2012.

\bibitem[CZ04]{cz}
P.~Corvaja and U.~Zannier.
\newblock On integral points on surfaces.
\newblock {\em Ann. of Math. (2)}, 160(2):705--726, 2004.

\bibitem[DG95]{darmon-granville}
Henri Darmon and Andrew Granville.
\newblock On the equations $x^p+y^q=z^r$ and $z^m=f(x,y)$.
\newblock {\em Bulletin of the London Mathematical Society}, 27(129):513--544,
  November 1995.

\bibitem[Dup02]{dupont}
C.~Dupont.
\newblock {\em Propri\'et\'es extr\'emales et caract\'eristiques des exemples
  de Latt\`es}.
\newblock PhD thesis, Universit\'e Paul Sabatier, Toulouse, 2002.

\bibitem[FG13]{favre-gauthier}
Charles Favre and Thomas Gauthier.
\newblock Distribution of postcritically finite polynomials.
\newblock arXiv:1302.0810, 2013.

\bibitem[Ful69]{fulton}
William Fulton.
\newblock Hurwitz schemes and irreducibility of moduli of algebraic curves.
\newblock {\em Ann. of Math. (2)}, 90:542--575, 1969.

\bibitem[Har77]{hartshorne}
Robin Hartshorne.
\newblock {\em Algebraic geometry}.
\newblock Springer-Verlag, New York, 1977.
\newblock Graduate Texts in Mathematics, No. 52.

\bibitem[JM14]{jones-manes}
Rafe Jones and Michelle Manes.
\newblock Galois theory of quadratic rational functions.
\newblock {\em Comment. Math. Helv.}, 89(1):173--213, 2014.

\bibitem[LT]{levy-tucker}
Alon Levy and Thomas Tucker.
\newblock P{C}{F} all the way down.
\newblock in preparation.

\bibitem[Pin13]{pink}
Richard Pink.
\newblock Finiteness and liftability of postcritically finite quadratic
  morphisms in arbitrary characteristic.
\newblock arXiv:1305.2841, 2013.

\bibitem[Rab12]{rabinoff}
Joseph Rabinoff.
\newblock Tropical analytic geometry, {N}ewton polygons, and tropical
  intersections.
\newblock {\em Adv. Math.}, 229:3192--3255, 2012.

\bibitem[Sil93]{silverman}
Joseph~H. Silverman.
\newblock Integer points, {D}iophantine approximation, and iteration of
  rational maps.
\newblock {\em Duke Math. J.}, 71(3):793--829, 09 1993.

\bibitem[Zan01]{zannier1}
Umberto Zannier.
\newblock Good reduction of certain covers {$\mathbf P^1\to\mathbf P^1$}.
\newblock {\em Israel J. Math.}, 124:93--114, 2001.

\end{thebibliography}

\end{document}